\newtheorem{theorem}{Theorem}
\newtheorem{prop}[theorem]{Proposition}
\newtheorem{lemma}[theorem]{Lemma}
\theoremstyle{definition}
\newtheorem{defn}[theorem]{Definition}
\newtheorem{remark}[theorem]{Remark}
\newtheorem{question}[theorem]{Question}
\numberwithin{equation}{section}
\begin{document}


\title[Characterizing inner product spaces via strong three-point
homogeneity]{A remark on characterizing inner product spaces via\\
strong three-point homogeneity}

\author{Sujit Sakharam Damase}
\address[S.S.~Damase]{Department of Mathematics, Indian Institute of
Science, Bangalore 560012, India; PhD student}
\email{\tt sujits@iisc.ac.in}

\author{Apoorva Khare}
\address[A.~Khare]{Department of Mathematics, Indian Institute of
Science, Bangalore 560012, India and Analysis \& Probability Research
Group, Bangalore 560012, India}
\email{\tt khare@iisc.ac.in}

\begin{abstract}
We show that a normed linear space is isometrically isomorphic to an
inner product space if and only if it is a strongly $n$-point
homogeneous metric space for any (or every) $n \geqslant 3$.
The counterpart for $n=2$ is the Banach--Mazur problem.
\end{abstract}

\date{\today}

\subjclass[2010]{46C15 (primary); 46B04, 46B20, 46B85 (secondary)}

\keywords{Normed linear space, inner product space, strong three-point
homogeneity}

\maketitle

Normed linear spaces and inner product spaces are central to much of
mathematics, in particular analysis and probability. The goal of this
short note is to provide a ``metric'' characterization that we were
unable to find in the literature, of when the norm in a linear space
arises from an inner product. This characterization is classical in
spirit, is in terms of a ``strong'' 3-point homogeneity property that
holds in all inner product spaces, and is adjacent to a well-known open
question.

Two prevalent themes in the early 20th century involved exploring metric
geometry -- e.g.\ when a (finite or) separable metric space isometrically
embeds into the Hilbert space of square-summable sequences $\ell^2 :=
\ell^2(\mathbb{N})$ -- and exploring when a normed linear space
$(\mathbb{B}, \| \cdot \|)$ is Hilbert, i.e.\ $\| \cdot \|^2$ arises from
an inner product on $\mathbb{B}$. See e.g.\
\cite{Birkhoff1}, \cite{Day1}, \cite{Ficken}--\cite{Frechet},
\cite{James1}--\cite{Kakutani}, \cite{Lorch}, \cite{Schoenberg35};
additional works on the latter theme can be found cited in \cite{James2}.
There have also been books -- see e.g.\ \cite{Blumenthal, Dan, Day2,
Istra} -- as well as later works.
This note, while squarely in the latter theme, is strongly
inspired by the former theme -- in which it is worth mentioning the
Mathematics Kolloquium \cite{Menger} of Karl Menger and others in Vienna,
from 1928--36. This long-running lecture series saw contributions in
metric geometry and related areas by Menger, G\"odel, von Neumann, and
others, and led to new developments in metric embeddings, metric
convexity, fixed point theory, and more. The goal of our work is to
provide such a distance-geometric characterization of an inner product.

We begin with some results from the latter theme. Jordan and von Neumann
showed in \cite{JvN} that the norm in a real or complex linear space
$\mathbb{B}$ comes from an inner product if and only if the parallelogram
law holds in $\mathbb{B}$; they also showed the (real and) complex
polarization identity in \textit{loc.\ cit.} We collect this and other
equivalent conditions for the norm $\| \cdot \|$ in a real or complex
linear space $\mathbb{B}$ to arise from an inner product:

\begin{theorem}
Suppose $(\mathbb{B}, \| \cdot \|)$ is a real or complex inner product
space. Its norm arises from an inner product if and only if any of the
following equivalent conditions holds (for the last two, we work only
over $\mathbb{R}$):
\begin{itemize}
\item[(IP1)] (Jordan and von Neumann, \cite{JvN}.)
$\| f + g \|^2 + \| f - g \|^2 = 2 (\| f \|^2 + \| g \|^2)$ for all $f,g
\in \mathbb{B}$.
(The parallelogram law; it originally appears in the
authors' work with Wigner, see \cite[p.~32]{JvNW}.)

\item[(IP2)] (Ficken, \cite{Ficken}.)
If $f, g \in \mathbb{B}$ with $\| f \| = \| g \|$, then $\| \alpha f +
\beta g \| = \| \beta f + \alpha g \|$ for all real $\alpha, \beta$.  

\item[(IP3)] (Day, \cite{Day1}.)
If $f, g \in \mathbb{B}$ with $\| f \| = \| g \| = 1$, then $\| f + g
\|^2 + \| f - g \|^2 = 4$.
(The parallelogram law, but only for rhombi.)

\item[(IP4)] (James, \cite{James2}, when $\dim \mathbb{B}
\geqslant 3$.)
For all $f,g \in \mathbb{B}$, $\| f + \alpha g \| \geqslant \| f \|$ for
all scalars $\alpha$, if and only if $\| g + \alpha f \| \geqslant \| g
\|$ for all scalars $\alpha$.
(Symmetry of Birkhoff--James orthogonality.)

\item[(IP5)] (Lorch, \cite{Lorch}, over $\mathbb{R}$.)
There exists a fixed constant $\gamma' \in \mathbb{R} \setminus \{ 0, \pm
1 \}$\footnote{We correct a small typo in Lorch's (IP5) in \cite{Lorch}: he
stated $\gamma' \neq 0, 1$ but omitted excluding $-1$; but clearly
$\gamma' = -1$ ``works'' for every normed linear space $\mathbb{B}$ and
all vectors $f',g' \in \mathbb{B}$, since $\| f' - g' \| = \| g' - f'
\|$.}
such that whenever $f',g' \in \mathbb{B}$ with $\| f' \| = \| g' \|$, we
have $\| f' + \gamma' g' \| = \| g' + \gamma' f' \|$.

\item[(IP6)] (Lorch, \cite{Lorch}, over $\mathbb{R}$.)
There exists a fixed constant $\gamma \in \mathbb{R} \setminus \{ 0, \pm
1 \}$ such that whenever $f,g \in \mathbb{B}$ with $\| f + g \| = \| f -
g \|$, we have $\| f + \gamma g \| = \| f - \gamma g \|$.
\end{itemize}
\end{theorem}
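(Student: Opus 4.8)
The plan is to make the Jordan--von Neumann equivalence (IP1) $\Longleftrightarrow$ \emph{the norm comes from an inner product} the keystone, and to close a cycle of implications among (IP1)--(IP6). In every case the implication from \emph{inner-product space} to (IP$k$) is a one-line computation: over $\mathbb{R}$, writing $\langle\cdot,\cdot\rangle$ for the inner product, one has $\|f+g\|^2-\|f-g\|^2=4\langle f,g\rangle$, $\|\alpha f+\beta g\|^2-\|\beta f+\alpha g\|^2=(\alpha^2-\beta^2)(\|f\|^2-\|g\|^2)$, $\|f+\gamma g\|^2-\|f-\gamma g\|^2=4\gamma\langle f,g\rangle$, and $\|f+\alpha g\|^2-\|f\|^2=2\alpha\langle f,g\rangle+\alpha^2\|g\|^2$, yielding (IP1), then (IP2) and (IP5), then (IP6), then (IP4), while (IP3) is just the restriction of (IP1) to unit vectors. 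So the substance is in the converses, and I would first recall the proof that (IP1) forces an inner-product norm: define $\langle f,g\rangle$ by the real (resp.\ complex) polarization identity, use the parallelogram law to obtain additivity in each slot, bootstrap this to $\mathbb{Q}$-homogeneity, and invoke continuity of the norm for $\mathbb{R}$- and then $\mathbb{C}$-homogeneity; conjugate symmetry and positive-definiteness are immediate.

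Next I would route the remaining conditions back to (IP1). For the implication from (IP3): homogeneity at once promotes the rhombus identity to $\|f+g\|^2+\|f-g\|^2=2(\|f\|^2+\|g\|^2)$ for every equal-norm pair, and Day's argument removes the equal-norm restriction --- or, since (IP1) and (IP3) both constrain only two-dimensional subspaces, one may simply pass to the plane. For (IP2): it trivially forces (IP5), by taking the single pair $(\alpha,\beta)=(1,\gamma')$ with any fixed $\gamma'\notin\{0,\pm1\}$, so it suffices to handle Lorch's conditions. For (IP4) with $\dim\mathbb{B}\geqslant 3$, I would invoke James~\cite{James2}, whose argument uses a genuinely three-dimensional rigidity to pass from symmetry of Birkhoff--James orthogonality to an inner product; this is among the deeper classical implications being collected, and I would cite rather than reprove it.

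The Lorch conditions I would treat in two steps. First, (IP5) $\Longleftrightarrow$ (IP6): setting $f=\tfrac12(f'+g')$ and $g=\tfrac12(f'-g')$ turns $\|f+g\|=\|f-g\|$ into $\|f'\|=\|g'\|$ and, after dividing by $|1+\gamma|$, turns $\|f+\gamma g\|=\|f-\gamma g\|$ into $\|f'+\gamma'g'\|=\|g'+\gamma'f'\|$ with $\gamma'=\tfrac{1-\gamma}{1+\gamma}$; the M\"obius map $\gamma\mapsto\tfrac{1-\gamma}{1+\gamma}$ is an involution of $\mathbb{R}\setminus\{0,\pm1\}$ onto itself that never outputs $-1$, which is precisely why $\gamma'=-1$ must be excluded in (IP5), as in the footnoted correction. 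Second, the implication from (IP6) with a single fixed $\gamma\notin\{0,\pm1\}$: let $G$ be the set of scalars for which the implication in (IP6) holds. Then $G$ is closed under limits (continuity of the norm), is symmetric under $\gamma\mapsto-\gamma$, contains $0,\pm1$ and the given $\gamma$, and is closed under multiplication --- applying the $\gamma_2$-instance of (IP6) to the pair $(f,\gamma_1 g)$ gives $\gamma_1\gamma_2\in G$ --- while the companion set on the (IP5) side is closed under $a\oplus b:=\tfrac{a+b}{1+ab}$ and under $a\mapsto-a$. From these closure properties one forces $G=\mathbb{R}$, i.e.\ (IP2), and hence (IP1).

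I expect this last step --- squeezing $G=\mathbb{R}$, equivalently the full parallelogram law, out of ``one scalar $\gamma\neq0,\pm1$ works'' --- to be the main obstacle: it carries the entire content of Lorch's characterization and is not formal from the closure properties alone, so here I would follow \cite{Lorch}. Secondary difficulties are the additivity step inside the Jordan--von Neumann argument (especially over $\mathbb{C}$) and the three-dimensional rigidity behind (IP4). Everything else is bookkeeping, arranged so that ``(IP$k$) holds in every Hilbert space'' together with the converses ``(IP3) $\Rightarrow$ (IP1)'', ``(IP2) $\Rightarrow$ (IP5) $\Leftrightarrow$ (IP6) $\Rightarrow$ (IP1)'', and ``(IP4) $\Rightarrow$ (IP1)'' closes the equivalence.
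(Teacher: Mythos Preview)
Your proposal is correct and follows essentially the same logical structure as the paper: the forward implications are immediate computations, Lorch's (IP5) $\Leftrightarrow$ (IP6) $\Rightarrow$ (IP2) handles the Lorch conditions, and the converses from (IP1), (IP2), (IP3), (IP4) are attributed to Jordan--von~Neumann, Ficken, Day, and James respectively. The only difference is one of detail: the paper treats this theorem as a catalogue of known characterizations and ``proves'' it in a single sentence by citing the original sources, whereas you actually sketch several of the arguments (the polarization construction for (IP1), the M\"obius substitution $\gamma'=\tfrac{1-\gamma}{1+\gamma}$ for (IP5) $\Leftrightarrow$ (IP6), and the multiplicative closure of Lorch's set $G$). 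Your added content is accurate --- in particular the M\"obius involution and the observation that $G$ is multiplicatively closed are precisely how Lorch proceeds --- and you are right to flag that extracting $G=\mathbb{R}$ from closure alone is the nontrivial step, which you defer to \cite{Lorch} just as the paper does. One small point: your summary chain ``(IP6) $\Rightarrow$ (IP1)'' implicitly uses Ficken's result (IP2) $\Rightarrow$ inner product, which you should name explicitly for completeness; the paper's one-line proof does include this attribution.
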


Indeed, that every inner product space satisfies these properties is
immediate, while the implications (IP5) $\Longleftrightarrow$ (IP6)
$\implies$ (IP2) were shown by Lorch; and that (IP1), (IP2), (IP3), (IP4)
imply that $\| \cdot \|^2$ arises from an inner product were shown by the
respectively named authors above.

All of these characterizations of an inner product use the norm and the
vector space structure (over $\mathbb{R}$ or $\mathbb{C}$) on
$\mathbb{B}$. The goal of this note is to isolate the inner product using
the metric in $\mathbb{B}$ but \textit{avoiding} both the additive
structure and the (real or complex) scalar multiplication.
Thus, our result is in the spirit of both of the aforementioned classical
themes: characterizing inner products in normed linear spaces, while
using metric geometry alone.

\section{The main result and its proof}\label{S1}

To state our result, first recall from \cite{Wang} or even
\cite[p.~470]{Birkhoff2} that for an integer $n \geqslant 1$, a metric
space $(X,d)$ is \textit{$n$-point homogeneous} if given two finite
subsets $Y,Y' \subseteq X$ with $|Y| = |Y'| \leqslant n$, any isometry $T
: Y \to Y'$ extends to an isometry $: X \to X$. We will require a
somewhat more restrictive notion:

\begin{defn}
A metric space $(X,d)$ will be termed \textit{strongly $n$-point
homogeneous} if given subsets $Y,Y' \subseteq X$ with $|Y| = |Y'|
\leqslant n$, each isometry $T : Y \to Y'$ can be extended to an onto
isometry $: X \twoheadrightarrow X$.
\end{defn}

We now motivate our main result (and the above definition via onto
isometries). It seems to be folklore that the Euclidean space
$\mathbb{R}^k$ is $n$-point homogeneous for all $n$ -- and more strongly,
satisfies that every isometry between finite subsets $Y,Y' \subseteq
\mathbb{R}^k$, upon pre- and post- composing with suitable translations
in order to send $0$ to $0$, extends to an orthogonal linear map $:
\mathbb{R}^k \to \mathbb{R}^k$.

\begin{remark}
While not central to our main result, we will explain below why this
property also holds for infinite subsets of $(\mathbb{R}^k, \| \cdot
\|_2)$; a weakening of it was termed the \textit{free mobility postulate}
by Birkhoff \cite[pp.\ 469--470]{Birkhoff2}. However, this postulate is
not satisfied by any infinite-dimensional Hilbert space for infinite
subsets. This was already pointed out in 1944 by Birkhoff in
\textit{loc.\ cit.}; for a specific counterexample, see e.g.\ the proof
of \cite[Theorem 11.4]{WW} in $\ell^2$, where the left-shift operator
sending $Y := \{ (x_n)_{n \geqslant 0} \in \ell^2 : x_1 = 0 \}$ onto $Y'
:= \ell^2$ is an isometry that does not extend to $(1,0,0,\dots)$.
\end{remark}

Returning to our motivation: in fact all inner product spaces satisfy
this ``orthogonal extension property'' for all pairs of isometric finite
subsets $Y,Y'$, as we explain below. Here we are interested in the
converse question, i.e.,
\begin{enumerate}
\item[(a)] if this ``orthogonal extension property'' (with $Y,Y'$ finite)
characterizes inner product spaces (among normed linear spaces); and

\item[(b)] if yes, then how much can this property be weakened without
disturbing the characterization -- and if it can in fact be weakened to
use metric geometry alone.
(In an arbitrary normed linear space, we necessarily cannot use
orthogonality or inner products; but we also want to not use the vector
space operations either.)
\end{enumerate}

This note shows that indeed (a)~holds. Moreover,
(b)~we can indeed weaken the orthogonal extension property to
(i)~replacing the orthogonal linear map by merely an onto isometry -- not
necessarily linear \textit{a priori} -- and (ii)~working with 3-point
subsets $Y,Y'$. More precisely:

\begin{theorem}\label{Tmain}
Suppose $(\mathbb{B}, \| \cdot \|)$ is a nonzero real or complex normed
linear space. Then $\| \cdot \|^2$ arises from an inner product -- real
or complex, respectively -- if and only if $\mathbb{B}$ is strongly
$n$-point homogeneous for any (equivalently, every) $n \geqslant 3$.
\end{theorem}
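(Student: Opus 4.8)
The plan is to prove the two substantive implications and let the rest follow formally. Since the definition of strong $n$-point homogeneity quantifies over $|Y|=|Y'|\leqslant n$, a strongly $n$-point homogeneous metric space is automatically strongly $m$-point homogeneous for every $m\leqslant n$; hence it suffices to establish (a) if $\|\cdot\|^2$ comes from an inner product, then $\mathbb{B}$ is strongly $n$-point homogeneous for \emph{every} $n\geqslant 1$; and (b) if $\mathbb{B}$ is strongly $3$-point homogeneous, then $\|\cdot\|^2$ comes from an inner product. Given these, ``strongly $n$-point homogeneous for some $n\geqslant 3$'' first reduces (by restriction) to ``strongly $3$-point homogeneous'', which by (b) yields the inner product, which by (a) yields ``strongly $n$-point homogeneous for every $n$''; the remaining converse is (a) again.

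For (a), let $T\colon Y\to Y'$ be an isometry with $|Y|=|Y'|\leqslant n$. First I would normalize: fix $y_0\in Y$ and replace $T$ by $z\mapsto T(z+y_0)-T(y_0)$ on $Y-y_0$, an isometry sending $0$ to $0$. Such a map preserves norms, hence by polarization extends to a real-linear isometry $S$ from the finite-dimensional real subspace $V:=\mathrm{span}_{\mathbb{R}}(Y-y_0)$ onto $W:=\mathrm{span}_{\mathbb{R}}(Y'-T(y_0))$; its well-definedness and isometry are a one-line Gram-matrix computation, and $\dim_{\mathbb{R}}V=\dim_{\mathbb{R}}W$ because $S$ is an isometric bijection. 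Now work inside the finite-dimensional inner product space $V_0:=\mathrm{span}_{\mathbb{R}}(Y\cup Y')$: extend $S$ to an orthogonal transformation $\widehat S_0$ of $V_0$ by matching orthonormal bases of the orthocomplements of $V$ and of $W$ within $V_0$ (equal dimensions), and set $\widehat S:=\widehat S_0\oplus\mathrm{id}$ on the orthogonal decomposition $\mathbb{B}=V_0\oplus V_0^{\perp}$ (valid for finite-dimensional $V_0$ even when $\mathbb{B}$ is incomplete). Then $\widehat S$ is a linear isometric bijection of $\mathbb{B}$, and $x\mapsto\widehat S(x-y_0)+T(y_0)$ is an onto isometry extending $T$. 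The only care needed here is to stay inside $V_0$, thereby avoiding having to produce orthonormal bases of possibly incomplete or non-separable inner product spaces.

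For (b), suppose $\mathbb{B}$ is strongly $3$-point homogeneous and fix $f,g\in\mathbb{B}$ with $\|f\|=\|g\|$. The crux is the observation that on $Y:=\{0,f,g\}$ the map that fixes $0$ and transposes $f$ and $g$ is an isometry onto $Y':=\{0,g,f\}=Y$: its three pairwise distances $\|f\|$, $\|g\|$, $\|f-g\|$ are permuted correctly \emph{precisely because} $\|f\|=\|g\|$, and $|Y|\leqslant 3$. By strong $3$-point homogeneity this extends to an \emph{onto} isometry $\Phi$ of $\mathbb{B}$; by the Mazur--Ulam theorem $\Phi$ is real-affine, and $\Phi(0)=0$ forces $\Phi$ to be a real-linear isometric bijection $L$ with $Lf=g$ and $Lg=f$. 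Consequently $L(\alpha f+\beta g)=\alpha g+\beta f$ for all real $\alpha,\beta$, so applying the isometry $L$ gives $\|\alpha f+\beta g\|=\|\beta f+\alpha g\|$ for all real $\alpha,\beta$ --- which is exactly Ficken's condition (IP2). Since (IP2) holds over $\mathbb{R}$ and $\mathbb{C}$ and implies that $\|\cdot\|^2$ arises from an inner product, this finishes (b) and the theorem.

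The single idea the argument really rests on is the recognition that, via Mazur--Ulam, strong $3$-point homogeneity is just Ficken's algebraic criterion (IP2) in metric disguise: once the configuration $\{0,f,g\}\mapsto\{0,g,f\}$ is spotted, everything else is bookkeeping plus quotation of known results. It is also worth noting where ``$3$'' and ``onto'' enter: with two-point sets one would only recover that the linear isometry group of $\mathbb{B}$ acts transitively on its unit sphere, which is precisely the Banach--Mazur rotation problem and is not known to force an inner product; and without surjectivity of the extension one could not apply Mazur--Ulam to pass from isometry to linearity.
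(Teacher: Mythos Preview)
Your proof is correct and closely parallels the paper's. The forward implication is essentially identical: normalize to $0\mapsto 0$, use polarization to see that the Gram matrices coincide and hence the linear extension on the real span is a well-defined isometry, then work inside the finite-dimensional $\mathrm{span}_{\mathbb{R}}(Y\cup Y')$ and complete via orthogonal complements (the paper likewise stresses this passage to a finite-dimensional subspace so that the projection theorem applies even when $\mathbb{B}$ is incomplete).

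The only difference is in the reverse implication: you deduce Ficken's criterion (IP2) from the flip-isometry plus Mazur--Ulam, whereas the paper's primary argument deduces Lorch's (IP5) the same way. The paper records your (IP2) route in a remark, though there it first reduces to two-dimensional subspaces via the parallelogram law and applies Ficken planewise; your version is actually tidier, since you verify (IP2) globally on $\mathbb{B}$ and quote Ficken once. The paper prefers (IP5) on the grounds that Lorch's proof of (IP5)$\Rightarrow$inner product is lighter than Ficken's, but as a black-box citation either works equally well. Your closing remarks on why $n=2$ and ``onto'' are essential match the paper's discussion exactly.
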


\begin{remark}
Below, we will provide additional equivalent -- and \textit{a priori}
weaker -- conditions to add to Theorem~\ref{Tmain} in characterizing an
inner product. See Theorems~\ref{T4} and~\ref{T5}.
\end{remark}

To the best of our ability -- and that of a dozen experts -- we were
unable to find such a result proved in the literature. Before proceeding
to its proof, we discuss the assertion for $n=1,2$. If $n=1$ then
Theorem~\ref{Tmain} fails to hold, since every normed linear space
$\mathbb{B}$ is strongly one-point homogeneous: given $x, y \in
\mathbb{B}$, the translation $z \mapsto z + y-x$ is an onto isometry
sending $x$ to $y$.

If instead $n=2$ then one is asking if there exists a (real) linear space
$(\mathbb{B}, \| \cdot \|)$ with $\| \cdot \|^2$ not arising from an
inner product, such that given any $f,f',g,g' \in \mathbb{B}$ with $\|
f'-f \| = \| g'-g \|$, every isometry sending $f,f'$ to $g,g'$
respectively extends to an onto isometry of $\mathbb{B}$. By pre- and
post- composing with translations, one can assume $f=g=0$ and $\| f' \| =
\| g' \|$; now one is asking if every real normed linear space with
transitive group of onto-isometries fixing $0$ (these are called
\textit{rotations}) is isometrically isomorphic to an inner product
space. Thus we come to the well-known \textit{Banach--Mazur
problem}~\cite{Banach} --
which was affirmatively answered by Mazur for finite-dimensional
$\mathbb{B}$~\cite{Mazur},
has counterexamples among non-separable $\mathbb{B}$~\cite{PR},
and remains open for infinite-dimensional separable $\mathbb{B}$.
(This is also called the \textit{Mazur rotations problem}; see the
recent survey~\cite{survey}.)
This is when $n=2$; and the $n \geqslant 3$ case is Theorem~\ref{Tmain}.

\begin{remark}
Given the preceding paragraph, one can assume in Theorem~\ref{Tmain} that
$\mathbb{B}$ is infinite-dimensional, since if $\dim \mathbb{B} < \infty$
then strong 3-point homogeneity implies strong 2-point homogeneity, which
by Mazur's solution \cite{Mazur} to the Banach--Mazur problem implies
$\mathbb{B}$ is Euclidean. That said, our proof of Theorem~\ref{Tmain}
works uniformly over all normed linear spaces, and we believe is simpler
than using the Banach--Mazur problem (which moreover cannot be applied
for all $\mathbb{B}$).
\end{remark}

\begin{proof}[Proof of Theorem~\ref{Tmain}]
We begin by proving the real case.
We first explain the forward implication, starting with $\mathbb{B} \cong
(\mathbb{R}^k, \| \cdot \|_2)$ for an integer $k \geqslant 1$. As
is asserted (without proof) on \cite[p.~470]{Birkhoff2}, $\mathbb{R}^k$
is $n$-point homogeneous for every $n \geqslant 1$; we now show that it
is moreover strongly $n$-point homogeneous -- in fact, that it satisfies
the ``orthogonal extension property'' above:\smallskip

\textit{Let $Y,Y' \subseteq \mathbb{R}^k$, and let $T : Y \to Y'$ be an
isometry. By translating $Y$ and $Y'$, assume $0 \in Y \cap Y'$ and $T(0)
= 0$. Then $T$ extends to an orthogonal self-isometry $\widetilde{T}$ of
$(\mathbb{R}^k, \| \cdot \|_2)$.}\smallskip

This claim can be proved from first principles and is likely folklore
(e.g.\ see a skeleton argument in \cite[Section~38]{Blumenthal}).
However, for self-completeness, we present a detailed sketch via a
``lurking isometry'' argument, along with some supplementary remarks. For
full details, see \cite[Theorem 22.3]{Khare} and its proof.

The first step is to note that vectors $y_0, \dots, y_n$ in an inner
product space are linearly dependent if and only if their Gram matrix $G
:= ( \langle y_i, y_j \rangle)_{i,j=0}^n$ is singular, since
\begin{equation}\label{Eobserve}
v := \sum_{j=0}^n c_j y_j = {\bf 0}
\quad \implies \quad G {\bf c} = {\bf 0}
\quad \implies \quad {\bf c}^* G {\bf c} = 0
\quad \implies \quad \| v \|^2 = 0.
\end{equation}

This simple fact yields several noteworthy consequences; we mention two
here, without proof. The first is an 1841 result by Cayley \cite{Cayley}
(during his undergraduate days):

\begin{lemma}
Let $(X = \{ x_0, \dots, x_n \}, d)$ be a finite metric space, and $\Psi
: X \to \ell^2$ an isometry. Then the vectors $\Psi(X)$ are affine
linearly-dependent (i.e.\ lie in an $(n-1)$-dimensional affine subspace)
if and only if the Cayley--Menger matrix of $X$ is singular:
\begin{equation}
CM(X)_{(n+2) \times (n+2)} := \begin{pmatrix}
0 & d_{01}^2 & d_{02}^2 & \cdots & d_{0n}^2 & 1\\
d_{10}^2 & 0 & d_{12}^2 & \cdots & d_{1n}^2 & 1\\
d_{20}^2 & d_{21}^2 & 0 & \cdots & d_{2n}^2 & 1\\
\vdots & \vdots & \vdots & \ddots & \vdots & \vdots\\
d_{n0}^2 & d_{n1}^2 & d_{n2}^2 & \cdots & 0 & 1\\
1 & 1 & 1 & \cdots & 1 & 0
\end{pmatrix}, \ \text{where} \ d_{ij} = d_X(x_i, x_j).
\end{equation}
\end{lemma}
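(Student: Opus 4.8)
The plan is to reduce the claim to the Gram-matrix observation~\eqref{Eobserve}. Write $y_i := \Psi(x_i) \in \ell^2$ for $0 \le i \le n$; since $\Psi$ is an isometry, $\|y_i - y_j\| = d_{ij}$. I would first record that $\{y_0, \dots, y_n\}$ lies in a common $(n-1)$-dimensional affine subspace if and only if the $n$ vectors $z_i := y_i - y_0$ ($1 \le i \le n$) are linearly dependent: a nontrivial affine relation $\sum_{i=0}^n c_i y_i = \mathbf{0}$ with $\sum_{i=0}^n c_i = 0$ is precisely a nontrivial linear relation $\sum_{i=1}^n c_i z_i = \mathbf{0}$ (take $c_0 := -\sum_{i \ge 1} c_i$), and conversely. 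Applying~\eqref{Eobserve} to $z_1, \dots, z_n$ in the inner product space $\ell^2$, this in turn holds if and only if the Gram matrix $G := \bigl(\langle z_i, z_j\rangle\bigr)_{i,j=1}^n$ is singular.

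It therefore suffices to prove that $CM(X)$ is singular if and only if $G$ is. For this I would use the polarization identity in $\ell^2$, which (since $d_{00}=0$) gives $\langle z_i, z_j \rangle = \langle y_i - y_0,\, y_j - y_0\rangle = \tfrac12\bigl(d_{0i}^2 + d_{0j}^2 - d_{ij}^2\bigr)$, and then perform elementary row and column operations on the $(n+2) \times (n+2)$ matrix $CM(X)$, which leave the determinant unchanged. Subtracting row $0$ from rows $1, \dots, n$ and then column $0$ from columns $1, \dots, n$ turns the $(i,j)$ entry with $1 \le i,j \le n$ into $d_{ij}^2 - d_{0i}^2 - d_{0j}^2 = -2\langle z_i, z_j\rangle$, replaces by $0$ the all-ones entries lying in those same rows and columns, and leaves untouched the $2\times 2$ submatrix $\left(\begin{smallmatrix}0 & 1\\ 1 & 0\end{smallmatrix}\right)$ sitting in row/column $0$ together with the last (border) row and column. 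Taking the Schur complement with respect to this invertible $2\times 2$ block --- whose inverse merely swaps the two relevant coordinates --- a short computation shows that the Schur-complement correction (the triple product of the lower-left block, the inverse $2\times 2$ block, and the upper-right block) vanishes, whence $\det CM(X) = \det\left(\begin{smallmatrix}0&1\\1&0\end{smallmatrix}\right)\cdot\det(-2G) = (-1)^{n+1}\,2^n \det G$.

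Putting the two steps together, $CM(X)$ is singular if and only if $\det G = 0$, which by the first paragraph holds if and only if $\Psi(X) = \{y_0, \dots, y_n\}$ lies in an $(n-1)$-dimensional affine subspace, as claimed. The only step requiring genuine care is the determinant bookkeeping in the second paragraph --- keeping track of which all-ones row and column survive the operations, and verifying that the Schur-complement cross-term really vanishes --- but this is routine linear algebra; everything else follows formally from~\eqref{Eobserve} and the polarization identity. (Alternatively one could invoke the classical Cayley--Menger formula expressing $\det CM(X)$ as a nonzero multiple of the squared volume of the simplex on $y_0, \dots, y_n$; deriving the identity directly as above keeps the argument self-contained.)
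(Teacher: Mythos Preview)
Your argument is correct. The reduction of affine dependence of the $y_i$ to singularity of the Gram matrix $G$ of the $z_i = y_i - y_0$ is exactly the content of~\eqref{Eobserve}, the polarization step is standard, and the determinant computation checks out: after the row and column subtractions the off-diagonal blocks (in the partition $\{0,n+1\} \sqcup \{1,\dots,n\}$) become $B = \bigl(\begin{smallmatrix} d_{01}^2 \ \cdots \ d_{0n}^2 \\ 0 \ \cdots \ 0 \end{smallmatrix}\bigr)$ and $C = B^T$, and since $A^{-1} = A$ swaps the two rows of $B$, one indeed gets $C A^{-1} B = 0$, whence $\det CM(X) = (-1)^{n+1} 2^n \det G$.

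As for comparison: the paper does not actually prove this lemma. It is presented as one of two consequences of~\eqref{Eobserve} that are ``mention[ed] \dots\ without proof,'' attributed to Cayley~\cite{Cayley}. Your derivation is precisely in the spirit the paper intimates --- deducing the result from the Gram-matrix observation~\eqref{Eobserve} together with polarization --- so there is nothing to contrast; you have simply supplied the details the authors chose to omit.
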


The second consequence is used below, but also underlies the Global
Positioning System (GPS) ``trilateration'' -- i.e.\ that every point on
say a Euclidean plane $P$ is uniquely determined by its distances from
three non-collinear points in $P$. More generally:

\begin{prop}\label{PGPS}
Fix $y_0 = 0, \dots, y_n \in \ell^2$. The following are equivalent for $y
\in \ell^2$:
\begin{enumerate}
\item $y$ is (uniquely) determined by its distances from $y_0, \dots, y_n$.

\item $y$ is in the span of $y_1, \dots, y_n$.
\end{enumerate}
\end{prop}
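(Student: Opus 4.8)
The plan is to prove both implications of Proposition~\ref{PGPS} directly, using the observation \eqref{Eobserve} that relates linear dependence to singularity of the Gram matrix.

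\medskip

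First I would prove (2) $\implies$ (1). Suppose $y = \sum_{i=1}^n c_i y_i$ lies in the span of $y_1, \dots, y_n$, and suppose $y'' \in \ell^2$ satisfies $\|y'' - y_j\| = \|y - y_j\|$ for all $j = 0, 1, \dots, n$. Taking $j = 0$ gives $\|y''\| = \|y\|$, and expanding $\|y'' - y_j\|^2 = \|y''\|^2 - 2\langle y'', y_j\rangle + \|y_j\|^2$ and comparing with the analogous expansion for $y$, I conclude $\langle y'', y_j\rangle = \langle y, y_j\rangle$ for all $j = 1, \dots, n$. Hence $y'' - y$ is orthogonal to each $y_j$, and therefore to their span, which contains $y'' - y$ itself (since $y''$ has the same inner products against the $y_j$, one checks $y'' - y \in \mathrm{span}(y_1,\dots,y_n)$ — actually more carefully: $\langle y'' - y, y_j \rangle = 0$ for all $j$ means $y'' - y \perp \mathrm{span}(y_i)$, so $\langle y'' - y, y\rangle = 0$, and then $\|y'' - y\|^2 = \langle y'' - y, y''\rangle - \langle y'' - y, y\rangle = \langle y'' - y, y''\rangle$; I would instead directly note $\|y'' - y\|^2 = \|y''\|^2 - 2\langle y'', y\rangle + \|y\|^2 = 2\|y\|^2 - 2\langle y'', y\rangle$, and since $\langle y'', y\rangle = \sum c_i \langle y'', y_i\rangle = \sum c_i \langle y, y_i\rangle = \|y\|^2$, this gives $\|y'' - y\|^2 = 0$). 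Thus $y'' = y$, proving uniqueness.

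\medskip

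Next I would prove the contrapositive of (1) $\implies$ (2): if $y \notin \mathrm{span}(y_1, \dots, y_n)$, then $y$ is \emph{not} determined by its distances from $y_0, \dots, y_n$. Let $H = \mathrm{span}(y_1, \dots, y_n)$, a finite-dimensional (hence closed) subspace of $\ell^2$, and write $y = h + w$ with $h \in H$ the orthogonal projection of $y$ onto $H$ and $0 \neq w \perp H$. Then $y'' := h - w = y - 2w$ satisfies $\|y'' - y_j\| = \|y - y_j\|$ for every $j \in \{0, 1, \dots, n\}$: indeed $y'' - y_j = (y - y_j) - 2w$ with $w \perp (y - y_j)$ when $y_j \in H$ — and $y_0 = 0 \in H$, while $y_1, \dots, y_n \in H$ — so by the Pythagorean theorem $\|y'' - y_j\|^2 = \|y - y_j\|^2 + 4\|w\|^2$... this is \emph{not} equal, so I must reflect differently. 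The correct reflection: since $w \perp (y_j - y)$ is false in general; rather $w \perp y_j$ and $y - y_j = h + w - y_j$ with $h - y_j \in H \perp w$, so $\langle y - y_j, w\rangle = \|w\|^2 \neq 0$. So the naive reflection across $H$ does not preserve distances to $y$'s neighbors because $y$ itself is off $H$. The fix is to reflect across the affine subspace through $y$ parallel to $H$, i.e., $y'' := y$ — trivial. Instead, the genuinely correct construction: any $y''$ with $\langle y'', y_j\rangle = \langle y, y_j\rangle$ for all $j$ and $\|y''\| = \|y\|$ works; take $y'' = h + w'$ where $w' \perp H$, $\|w'\| = \|w\|$, $w' \neq w$ (possible since $\ell^2$ is infinite-dimensional, or since $\dim H^\perp \geq 2$; if $H^\perp$ is one-dimensional take $w' = -w$, which works iff $\langle y, y_j \rangle$ unchanged — yes since $w, -w \perp y_j$ — and $\|h - w\| = \|h + w\|$ by Pythagoras). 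Then $\|y'' - y_j\|^2 = \|h + w' - y_j\|^2 = \|h - y_j\|^2 + \|w'\|^2 = \|h - y_j\|^2 + \|w\|^2 = \|y - y_j\|^2$ for each $j$, using $y_j \in H$ and $w, w' \perp H$. So $y''$ has the same distances as $y$ but $y'' \neq y$, completing the contrapositive.

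\medskip

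The main obstacle is pinning down exactly the reflection/perturbation $y \mapsto y''$ in the proof of $\neg(2) \implies \neg(1)$: one must reflect (or rotate) only in the component of $y$ orthogonal to $H = \mathrm{span}(y_1, \dots, y_n)$, keeping the $H$-component fixed, so that all inner products $\langle \cdot, y_j\rangle$ and the norm are preserved. Because the $y_j$ themselves lie in $H$ (recall $y_0 = 0$), this is exactly the freedom that a nonzero orthogonal complement provides — and in $\ell^2$ that complement is always nonempty when $y \notin H$. I would state this cleanly as: the distances of $y$ to $y_0, \dots, y_n$ determine precisely the orthogonal projection of $y$ onto $\mathrm{span}(y_1, \dots, y_n)$ together with the \emph{norm} of its orthogonal component, and no more; hence uniqueness holds exactly when that orthogonal component is zero, i.e., exactly when $y \in \mathrm{span}(y_1, \dots, y_n)$.
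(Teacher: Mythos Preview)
Your proof is correct, and the paper itself does not prove this proposition but merely cites an external reference (\cite[Proposition~22.7]{Khare}), so there is no in-paper argument to compare against. Your approach via inner-product identities for $(2)\Rightarrow(1)$ and orthogonal decomposition for $\neg(2)\Rightarrow\neg(1)$ is the standard and natural one.

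One point of exposition worth cleaning up: your initial reflection $y'' = h - w$ \emph{does} work, contrary to your mid-proof retraction. You wrote $y'' - y_j = (y - y_j) - 2w$ and correctly observed that $w \not\perp (y - y_j)$, but then drew the wrong conclusion. Computing instead via the decomposition $h \pm w - y_j = (h - y_j) \pm w$ with $h - y_j \in H \perp w$, Pythagoras gives
\[
\|h - w - y_j\|^2 = \|h - y_j\|^2 + \|w\|^2 = \|h + w - y_j\|^2,
\]
so $y'' = h - w \neq y$ already has the same distances to every $y_j$. Your more general construction $y'' = h + w'$ with $\|w'\| = \|w\|$ is also fine (and indeed contains $w' = -w$ as a special case), but the detour and self-correction obscure what is really a two-line argument. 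The concluding summary you give --- that the distances determine exactly the $H$-projection of $y$ together with the norm of its orthogonal component --- is the right way to phrase the result and would serve well as the organizing statement of the whole proof.
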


\noindent (See e.g.\ \cite[Proposition~22.7]{Khare}.)
Returning to the proof of the orthogonal extension property for
$\mathbb{R}^k$: fix a maximal linearly independent subset $\{ y_1,
\dots, y_r \}$ in $Y$. Then we claim, so is $\{ T(y_1), \dots, T(y_r)
\}$. Indeed, by polarization we have
\begin{align}\label{Einnerprod}
2 \langle y_i, y_j \rangle
= &\ \| y_i - 0 \|_2^2 + \| y_j - 0 \|_2^2 - \| y_i - y_j \|_2^2\\
= &\ \| T(y_i) - T(0) \|_2^2 + \| T(y_j) - T(0) \|_2^2 -
\| T(y_i) - T(y_j) \|_2^2 = 2 \langle T(y_i), T(y_j) \rangle\notag
\end{align}
for $1 \leqslant i,j \leqslant r$. Thus the Gram matrix of the $T(y_i)$
equals that of the $y_i$, and hence is invertible as well; so
by~\eqref{Eobserve} the $T(y_j)$ are linearly independent too. Moreover,
for any other $y \in Y$, the Gram matrix of $y, y_1, \dots, y_r$ is
singular by~\eqref{Eobserve}, hence so is the Gram matrix of $T(y),
T(y_1), \dots, T(y_r)$ by~\eqref{Einnerprod}.

Next, we claim that the linear extension $\widetilde{T}$ of $T$ from $\{
y_1 \dots, y_r \}$ to ${\rm span}_{\mathbb{R}}(Y)$ (hence mapping into
${\rm span}_{\mathbb{R}}(Y')$) agrees with $T$ on $Y$.
Indeed, by maximality one writes $y \in {\rm span}_{\mathbb{R}}(Y)$ as
$\sum_{j=1}^r c_j(y) y_j$ and $T(y) \in {\rm
span}_{\mathbb{R}}(Y')$ as $\sum_{j=1}^r c'_j(y) T(y_j)$;
then~\eqref{Einnerprod} and Proposition~\ref{PGPS} show that $c_j \equiv
c'_j$ on $Y$ (for all $j$). Hence $\widetilde{T} \equiv T$ on $Y$.

It also follows from~\eqref{Einnerprod} that $\widetilde{T}$ preserves
lengths, hence is injective. Finally, choose orthonormal bases of the
orthocomplements in $\mathbb{R}^k$ of ${\rm span}_{\mathbb{R}}(Y)$
and of $\widetilde{T} \left( {\rm span}_{\mathbb{R}}(Y) \right)$, and map
the first of these bases (within ${\rm span}_{\mathbb{R}}(Y)^\perp$)
bijectively onto the second; then extend $\widetilde{T}$ to all of
$\mathbb{R}^k$ by linearity. Direct-summing these two orthogonal linear
maps yields the desired extension of $T$ to a linear self-isometry
$\widetilde{T}$ of $(\mathbb{R}^k, \| \cdot \|_2)$.

This linear orthogonal map $\widetilde{T}$ is necessarily injective on
$\mathbb{R}^k$, hence surjective as well. This shows the orthogonal
extension property for all (possibly infinite) isometric subsets $Y,Y'
\subseteq \mathbb{R}^k$, and hence the forward implication in the main
result for $\mathbb{B} \cong \mathbb{R}^k$.

If instead $\mathbb{B}$ is an infinite-dimensional inner product space,
and $T$ an isometry between $Y, Y' \subset \mathbb{B}$ of common size at
most $n$, first pre- and post- compose by translations to assume $0
\in  Y \cap Y'$ and $T(0) = 0$. Now let $\mathbb{B}_0 \cong
(\mathbb{R}^k, \| \cdot \|_2)$ be the span of $Y \cup Y'$. By the above
analysis, $T : Y \to Y'$ extends to an orthogonal operator on
$\mathbb{B}_0$, which we still denote by $T$; as $\mathbb{B}_0$ is a
complete subspace of $\mathbb{B}$, by the ``projection theorem'' (or from
first principles) we get $\mathbb{B} = \mathbb{B}_0 \oplus
\mathbb{B}_0^\perp$. Now the bijective orthogonal map $T|_{\mathbb{B}_0}
\oplus {\rm Id}|_{\mathbb{B}_0^\perp}$ completes the proof of the forward
implication -- for any $n \geqslant 1$.

We next come to the reverse implication; now $n \geqslant 3$. From the
definitions, it suffices to work with $n=3$. Moreover, the cases of $\dim
\mathbb{B} = 0,1$ are trivial since $\mathbb{B}$ is then an inner product
space, so the reader may also assume $\dim \mathbb{B} \in [2,\infty]$ in
the sequel, if required.

We work via contradiction: let $(\mathbb{B}, \| \cdot \|)$ be a nonzero
3-point homogeneous normed linear space, with $\| \cdot \|^2$ not induced
by an inner product. Then Lorch's condition~(IP5) fails to hold for any
$\gamma' \in \mathbb{R} \setminus \{ 0, \pm 1 \}$. Fix such a scalar
$\gamma'$; then there exist vectors $f',g' \in \mathbb{B}$ with $\| f' \|
= \| g' \|$ but $\| f' + \gamma' g' \| \neq \| g' + \gamma' f' \|$. In
particular, $0, f', g'$ are distinct.
Now let $Y = Y' = \{ 0, f', g' \}$, and consider the isometry $T : Y \to
Y'$ which fixes $0$ and interchanges $f',g'$. By hypothesis, $T$ extends
to an onto isometry $: \mathbb{B} \to \mathbb{B}$, which we also denote
by $T$. But then $T$ is affine-linear by the Mazur--Ulam theorem
\cite{MU}, hence is a linear isometry as $T(0) = 0$. This yields
\[
\| f' + \gamma' g' \| = \| f' - (- \gamma') g' \| = \| T(f') - T(-
\gamma' g') \| = \| g' + \gamma' f' \|,
\]
which provides the desired contradiction, and proves the reverse
implication.

\begin{remark}
An alternate argument to the one provided above (for the reverse
implication) goes as follows.
\textit{By the parallelogram law (IP1), it suffices to show every plane
$P \subseteq \mathbb{B}$ is Euclidean. Suppose $0 \neq f,g \in P$ with
$\| f \| = \| g \|$; then the isometry of $\{ 0, f, g \}$ that fixes $0$
and exchanges $f,g$ extends to an onto isometry of $\mathbb{B}$. By
Mazur--Ulam \cite{MU}, $T$ is affine-linear (hence linear as $T(0) = 0$),
and thus sends $P$ onto itself. But then $\| \alpha f + \beta g \| = \|
T(\alpha f + \beta g) \| = \| \beta f + \alpha g \|$ for all $\alpha,
\beta \in \mathbb{R}$. By Ficken's result (IP2) for $P$, $P$ is
Euclidean.}
We note that this argument is somewhat more involved than the one above,
as it makes use of
(i)~Ficken's characterization (IP2), which as Lorch wrote \cite{Lorch} is
\textit{a priori} more involved than Lorch's (IP5); and
(ii)~the parallelogram law (IP1), which our argument does not use (nor did
Lorch).\footnote{One can also see \cite[(2.8)]{Dan}, where Dan mentions
the flip map of an isosceles triangle. However, Dan requires the isometry
$T$ to be \textit{linear}, which is strictly stronger than our
characterization-hypothesis of strong homogeneity. Moreover, this
linearity assumption is indeed required by Dan, since he mentions in the
very next line that linearity can be dropped if $\mathbb{B}$ is assumed
to be strictly convex (see Theorem~\ref{Tstrictlyconvex}). In contrast,
Theorem~\ref{Tmain} makes no assumption either about the isometry $T$
(other than surjectivity), nor about the normed linear space
$\mathbb{B}$.}
\end{remark}

The above analysis proves the real case; now suppose $(\mathbb{B}, \|
\cdot \|)$ is a nonzero complex normed linear space. The forward
implication follows from the real case, since $(\mathbb{B}, \| \cdot \|)$
is also a real normed space -- which we denote by
$\mathbb{B}|_{\mathbb{R}}$. For the same reason, in the reverse direction
we obtain that $\mathbb{B}|_{\mathbb{R}}$ is isometrically isomorphic to
a real inner product space: $\| f \| = \sqrt{ \langle f, f
\rangle_{\mathbb{R}} }$ for a real inner product $\langle \cdot, \cdot
\rangle_{\mathbb{R}}$ on $\mathbb{B}|_{\mathbb{R}}$ and all $f \in
\mathbb{B}|_{\mathbb{R}}$. Now the complex polarization trick of
Jordan--von Neumann \cite{JvN} gives that
\begin{equation}\label{Epolarize}
\langle f, g \rangle := \langle f, g \rangle_{\mathbb{R}}
- i \langle if, g \rangle_{\mathbb{R}}
\end{equation}
is indeed a complex inner product on $\mathbb{B}$ satisfying: $\| f \| =
\sqrt{\langle f, f \rangle}$ for all $f$.
\end{proof}

\section{A second proof; Lorch's characterizations for complex linear
spaces}

We next provide a second proof of the ``reverse implication'' of
Theorem~\ref{Tmain} for complex linear spaces $\mathbb{B} =
\mathbb{B}|_{\mathbb{C}}$, which requires the \textit{complex version} of
Lorch's condition~(IP5) above. Note that the above argument over
$\mathbb{R}$ cannot immediately proceed verbatim over $\mathbb{C}$ for
two reasons:
\begin{enumerate}
\item[(a)] Lorch's condition~(IP5) needs to be verified as characterizing a
complex inner product.

\item[(b)] The Mazur--Ulam theorem does not go through over $\mathbb{C}$
-- e.g., the isometry $\eta : (z,w) \mapsto (\overline{z},w)$ of the
Hilbert space $(\mathbb{C}^2, \| \cdot \|_2)$ sends $(0,0)$ to itself,
but is neither $\mathbb{C}$-linear nor $\mathbb{C}$-antilinear.
\end{enumerate}

However, since $\eta$ is $\mathbb{R}$-linear on $\mathbb{C}^2$, this
reveals how to potentially fix~(b) for a second proof of the reverse
implication: it suffices to use Lorch's condition~(IP5) for $\gamma'$
still real -- and avoiding $0, \pm 1$ as above -- now for all vectors
$f',g' \in \mathbb{B}|_{\mathbb{C}}$. If this still characterizes a
complex inner product, then one could continue the above proof verbatim,
using the Mazur--Ulam theorem for \textit{real} normed linear spaces and
replacing the word ``linear'' twice by ``$\mathbb{R}$-linear'' in the
proof.

Thus we need to verify if Lorch's condition~(IP5) characterizes a complex
inner product. More broadly, one can ask which of Lorch's conditions
$(I_1)$--$(I_6)$ and $(I'_1) =$ (IP5) in \cite{Lorch} -- which
characterized an inner product in a \textit{real} normed linear space --
now do the same over $\mathbb{C}$:

\begin{theorem}[Lorch, \cite{Lorch}]
Suppose $\mathbb{B}$ is a real normed linear space. Then the norm
is induced by an inner product if and only if any of the following
equivalent conditions hold:
\begin{itemize}
\item[$(I_1)$] {\rm (Stated above as (IP6).)}
There exists a fixed constant $\gamma \in \mathbb{R} \setminus \{ 0, \pm
1 \}$ such that whenever $f,g \in \mathbb{B}$ with $\| f + g \| = \| f -
g \|$, we have $\| f + \gamma g \| = \| f - \gamma g \|$.

\item[$(I'_1)$] {\rm (Stated above as (IP5).)}
There exists a fixed constant $\gamma' \in \mathbb{R} \setminus \{ 0, \pm
1 \}$ such that whenever $f',g' \in \mathbb{B}$ with $\| f' \| = \| g'
\|$, we have $\| f' + \gamma' g' \| = \| g' + \gamma' f' \|$.

\item[$(I_2)$] A triangle is isosceles if and only if two medians are
equal:
\[
f+g+h = 0, \ \| f \| = \| g \| \quad \implies \quad \| f-h \| = \| g-h \|,
\qquad \forall f,g,h \in \mathbb{B}.
\]

\item[$(I_3)$] For all $f,g,h,k \in \mathbb{B}$,
\[
f+g+h+k = 0, \ \|f\| = \|g\|, \ \|h\| = \|k\| \quad \implies \quad
\|f - h\| = \|g - k\| \text{ and } \|g - h\| = \|f - k\|.
\]

\item[$(I_4)$] For all $f_1, f_2 \in \mathbb{B}$, the expression
\[
\phi(f_1, f_2; g) = \|f_1 + f_2 + g\|^2 + \|f_1 + f_2 - g\|^2 - \|f_1 -
f_2 - g\|^2 - \|f_1 - f_2 + g\|^2
\]
is independent of $g \in \mathbb{B}$.

\item[$(I_5)$] If $f,g \in \mathbb{B}$ and $\|f\| = \|g\|$, then
$\| \alpha f + \alpha^{-1} g \| \geqslant \| f+g \|$ for all $0 \neq
\alpha \in \mathbb{R}$.

\item[$(I_6)$] For a fixed integer $n \geqslant 3$, and vectors $f_1,
\ldots, f_n \in \mathbb{B}$, we have
\[
f_1 + \cdots + f_n = 0 \quad \implies \quad
\sum_{i < j} \|f_i - f_j\|^2 = 2n \sum_{i=1}^n \|f_i\|^2.
\]
\end{itemize}
\end{theorem}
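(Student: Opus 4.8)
The ``only if'' direction is routine and I would dispatch it first: writing $\|x\|^2 = \langle x,x\rangle$ and expanding, each of $(I_1)$--$(I_6)$ and $(I'_1)$ becomes an algebraic identity, with two mild extras -- for $(I_5)$ one also uses the elementary inequality $\alpha^2\|f\|^2 + \alpha^{-2}\|g\|^2 \geqslant 2\|f\|\,\|g\|$ together with $\|f\| = \|g\|$, and for $(I_6)$ one expands $0 = \|f_1 + \cdots + f_n\|^2$. So the real work is the ``if'' direction, and my plan is to reduce \emph{every} one of the seven conditions to one of the two characterizations already in hand in the excerpt: the parallelogram law (IP1) of Jordan--von Neumann \cite{JvN}, or Ficken's identity (IP2) \cite{Ficken}.

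Several conditions reduce directly. In $(I_4)$, putting $f_1 = f_2 = h/2$ makes $\|f_1 - f_2 \pm g\| = \|g\|$, so the $g$-independence of $\phi$ yields $\|h+g\|^2 + \|h-g\|^2 - 2\|h\|^2 = 2\|g\|^2$, i.e.\ (IP1). In $(I_6)$ for a given $n \geqslant 3$, the choice $f_1 = x$, $f_2 = y$, $f_3 = -(x+y)$, $f_4 = \cdots = f_n = 0$ collapses the identity to the $n = 3$ case
\[
\|x-y\|^2 + \|2x+y\|^2 + \|x+2y\|^2 = 3\|x\|^2 + 3\|y\|^2 + 3\|x+y\|^2
\qquad (x,y \in \mathbb{B}),
\]
from which the parallelogram law follows after a short chain of substitutions (replace $y$ by $-y$ and add, then substitute $(x,y) \mapsto (x,x+y)$, and so on); when $n$ is even one can instead plug in $\{x,y,-x,-y,0,\dots,0\}$ and read off (IP1) in one step. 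In $(I_2)$, eliminating $h = -(f+g)$ gives ``$\|f\| = \|g\| \implies \|2f+g\| = \|f+2g\|$'', which (divide by $2$) is $(I'_1)$ with $\gamma' = \tfrac12$; likewise $(I_3)$ with $h = k = -\tfrac12(f+g)$ gives $(I'_1)$ with $\gamma' = \tfrac13$. And $(I_1) \Leftrightarrow (I'_1)$: the substitution $u = f+g$, $v = f-g$ -- equivalently the involutive M\"obius change of parameter $\gamma' = \tfrac{1-\gamma}{1+\gamma}$, which carries $\mathbb{R}\setminus\{0,\pm1\}$ into itself -- transforms the hypothesis and conclusion of one into those of the other. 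For $(I_5)$ I would argue separately: replacing $g$ by $-g$ as well, writing $u = f+g$, $v = f-g$, and expanding $\alpha f \pm \alpha^{-1}g$ to first order at $\alpha = 1$, the inequality together with convexity of $t \mapsto \|u + tv\|$ (so that its one-sided derivatives at $0$ exist and straddle $0$) forces $u \perp_{BJ} v$ and $v \perp_{BJ} u$; hence $\|f\| = \|g\|$ (equivalently, isosceles orthogonality of $u,v$) implies two-sided Birkhoff--James orthogonality, and this in turn forces every $2$-plane of $\mathbb{B}$ to be Euclidean -- a fact closely related to James's (IP4) \cite{James2}.

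Thus everything comes down to one substantive lemma: $(I'_1)$ for a \emph{single} fixed $\gamma'_0 \in \mathbb{R}\setminus\{0,\pm1\}$ implies (IP2) (equivalently, $(I'_1)$ for \emph{all} parameters), after which Ficken's theorem \cite{Ficken} finishes. My plan here is to study the set
\[
S' := \{\gamma \in \mathbb{R} : \|f\| = \|g\| \implies \|f + \gamma g\| = \|\gamma f + g\| \ \text{for all } f,g \in \mathbb{B}\}.
\]
Then $S'$ is closed in $\mathbb{R}$ (an intersection of zero-sets of continuous functions of $\gamma$); it contains $0, \pm 1$ and $\gamma'_0$; it is stable under $\gamma \mapsto -\gamma$ (apply the defining property to the pair $(f,-g)$) and under $\gamma \mapsto 1/\gamma$ (the defining identity for $1/\gamma$ becomes that for $\gamma$ upon multiplying both sides by $|\gamma|$); and -- the key closure -- it is stable under the ``velocity-addition'' law $\gamma_1 \oplus \gamma_2 := \tfrac{\gamma_1 + \gamma_2}{1 + \gamma_1 \gamma_2}$, obtained by applying the defining property for $\gamma_2$ to the equal-norm pair $(f + \gamma_1 g,\ \gamma_1 f + g)$ and dividing by $1 + \gamma_1\gamma_2$.

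The main obstacle is the last step: the closure properties above only guarantee that $S'$ contains the \emph{discrete} group generated by $\gamma'_0$ under $\oplus$ (a cyclic group in suitable ``rapidity'' coordinates), which is far from all of $\mathbb{R}$, so one needs genuinely more to conclude $S' = \mathbb{R}$. Here I would descend to a single $2$-plane $P \subseteq \mathbb{B}$ (legitimate, since by (IP1) it suffices to prove every plane Euclidean), parametrize its unit circle, rewrite the flip identity $\|f + \gamma'_0 g\| = \|\gamma'_0 f + g\|$ over all equal-norm pairs in $P$ as a functional equation for the norm, and combine it with the convexity and monotonicity of $\gamma \mapsto \|f + \gamma g\|$ and a limiting argument to bridge the gaps between consecutive points of the $\oplus$-orbit -- forcing the unit circle of $P$ to be an ellipse. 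Ficken's theorem (or, once the plane is Euclidean, (IP1)) then completes the proof. I expect this $2$-plane rigidity argument -- extracting full invariance from invariance under one non-trivial parameter -- to be the heart of the matter; it is also precisely the ingredient whose survival over $\mathbb{C}$ is the open question taken up immediately afterwards.
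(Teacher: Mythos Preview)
The paper does not prove this theorem; it is quoted verbatim from Lorch~\cite{Lorch} as background, so there is no in-paper argument to compare against. I comment on your plan on its own merits.

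Your reductions of $(I_2)$, $(I_3)$, $(I_4)$ to $(I'_1)$ or (IP1) are correct and efficient, and your $(I_1)\Leftrightarrow(I'_1)$ via the M\"obius change $\gamma'=\frac{1-\gamma}{1+\gamma}$ is exactly Lorch's own observation. (A side remark: the paper's $(I_6)$ carries a typo --- the right-hand side should be $n\sum_i\|f_i\|^2$, not $2n\sum_i\|f_i\|^2$, as a one-line Hilbert-space check shows; your displayed ``$n=3$'' identity already uses the correct constant $3$, so you have silently fixed this.) Your $(I_5)$ sketch is essentially right: rewriting $\alpha f+\alpha^{-1}g=su+tv$ with $s^2-t^2=1$ turns the hypothesis into $\|u+\tau v\|\geq\|u\|\sqrt{1-\tau^2}$ for $|\tau|<1$, and comparing one-sided derivatives at $\tau=0$ with convexity does yield $u\perp_{BJ}v$; you still owe a precise citation for ``isosceles orthogonality $\Rightarrow$ Birkhoff--James orthogonality implies inner product'', since (IP4) as stated is not quite that.

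The genuine gap is exactly where you place it: $(I'_1)$ for a \emph{single} $\gamma'_0\Rightarrow$ inner product. Your closure properties for $S'$ are correct, but --- as you say --- under the rapidity change $\gamma=\tanh\theta$ the operation $\oplus$ becomes ordinary addition, so the orbit of $\gamma'_0$ is a cyclic (hence discrete) subgroup, and closedness, sign-flip, inversion, and $\oplus$ together cannot force $S'=\mathbb{R}$. Your proposed remedy (``descend to a $2$-plane, use convexity/monotonicity to bridge gaps in the orbit'') is a hope, not yet an argument; Lorch's original paper does carry out a delicate planar analysis at precisely this point, and it is the bulk of his work. Until you supply that step, the proof is incomplete. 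One small correction to your last sentence: the paper's next section \emph{settles} the complex case (restrict to $\mathbb{B}|_{\mathbb{R}}$, apply the real Lorch theorem, then recover the complex inner product via Jordan--von Neumann polarization~\eqref{Epolarize}); it is not left open.
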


In fact, these characterizations have since been cited and applied in
many papers that work over real linear spaces. The need to work over
complex normed linear spaces -- as well as the fact that several
characterizations in \cite{Day1,Ficken,JvN} before Lorch held uniformly
over both $\mathbb{R}$ and $\mathbb{C}$ -- provide natural additional
reasons to ask if Lorch's conditions also characterize complex inner
products.

We quickly explain why this does hold. Indeed, Lorch's characterizations
themselves involve real scalars, so even if one starts with a complex
normed linear space $\mathbb{B}$, one still obtains a real inner product
$\langle \cdot, \cdot \rangle_{\mathbb{R}}$ on $\mathbb{B} =
\mathbb{B}|_{\mathbb{R}}$. Now the discussion around~\eqref{Epolarize}
recovers the complex inner product from $\langle \cdot, \cdot
\rangle_{\mathbb{R}}$. This yields

\begin{theorem}[``Complex Lorch'']
A complex nonzero normed linear space $(\mathbb{B}, \| \cdot \|)$ is
isometrically isomorphic to a complex inner product space if and only if
any of the following conditions of Lorch \cite{Lorch} holds:
$(I_1) =$ (IP6), $(I'_1) =$ (IP5),
or $(I_2), \dots, (I_6)$ -- now stated verbatim in $\mathbb{B}$, with the
constants $\gamma, \gamma', \alpha$ still being real.
\end{theorem}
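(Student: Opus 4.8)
The plan is to reduce the complex statement to the already-established real version of Lorch's theorem, then upgrade a real inner product to a complex one via the Jordan--von Neumann polarization formula~\eqref{Epolarize}. The key observation is that each of Lorch's conditions $(I_1),(I'_1),(I_2),\dots,(I_6)$ is phrased purely in terms of the norm $\|\cdot\|$, vector addition, and \emph{real} scalar multiplication by the fixed constants $\gamma,\gamma',\alpha$ (or by integers, in $(I_6)$). Therefore, if $(\mathbb{B},\|\cdot\|)$ is a complex normed linear space satisfying one of these conditions, then its underlying \emph{real} normed linear space $\mathbb{B}|_{\mathbb{R}}$ -- same underlying set, same norm, scalar multiplication restricted to $\mathbb{R}$ -- satisfies the very same condition, since all the scalars appearing are real and all the operations used (norm, addition, real scaling) are unchanged upon passing from $\mathbb{B}$ to $\mathbb{B}|_{\mathbb{R}}$.

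First I would invoke the real ``Lorch'' theorem as stated in the excerpt: since $\mathbb{B}|_{\mathbb{R}}$ is a real normed linear space satisfying one of $(I_1),(I'_1),(I_2),\dots,(I_6)$, its norm is induced by a real inner product $\langle\cdot,\cdot\rangle_{\mathbb{R}}$ on $\mathbb{B}|_{\mathbb{R}}$, i.e.\ $\|f\|=\sqrt{\langle f,f\rangle_{\mathbb{R}}}$ for all $f$. Next I would define $\langle f,g\rangle:=\langle f,g\rangle_{\mathbb{R}}-i\langle if,g\rangle_{\mathbb{R}}$ as in~\eqref{Epolarize} and verify that this is a genuine complex inner product on $\mathbb{B}$ recovering the norm. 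This verification is exactly the complex polarization trick of Jordan and von Neumann~\cite{JvN}: one checks conjugate-symmetry $\langle g,f\rangle=\overline{\langle f,g\rangle}$ (using symmetry of $\langle\cdot,\cdot\rangle_{\mathbb{R}}$ together with the identity $\langle if,g\rangle_{\mathbb{R}}=-\langle f,ig\rangle_{\mathbb{R}}$, which follows because multiplication by $i$ is a real-linear isometry of $\mathbb{B}|_{\mathbb{R}}$, hence orthogonal, hence skew-adjoint); additivity in the first slot (inherited from $\langle\cdot,\cdot\rangle_{\mathbb{R}}$); real-homogeneity in the first slot (likewise); and complex-homogeneity, which reduces to checking $\langle if,g\rangle=i\langle f,g\rangle$, a direct computation from the definition using $i^2=-1$. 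Positive-definiteness and the norm identity are immediate since $\langle f,f\rangle=\langle f,f\rangle_{\mathbb{R}}-i\langle if,f\rangle_{\mathbb{R}}=\langle f,f\rangle_{\mathbb{R}}=\|f\|^2$, the middle term vanishing by skew-adjointness of multiplication by $i$. The converse direction -- that a complex inner product space satisfies all of Lorch's conditions -- is routine and essentially already noted in the excerpt, since every inner product space (real or complex) satisfies each parallelogram-type identity directly.

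The only genuinely delicate point, and the step I would flag as the main obstacle, is the claim that multiplication by $i$ is an isometry of $\mathbb{B}|_{\mathbb{R}}$ and hence is \emph{skew-adjoint} with respect to $\langle\cdot,\cdot\rangle_{\mathbb{R}}$, i.e.\ $\langle if,g\rangle_{\mathbb{R}}=-\langle f,ig\rangle_{\mathbb{R}}$. The isometry claim is trivial ($\|if\|=\|f\|$ since $|i|=1$ in any complex normed space), but one must then deduce skew-adjointness: a real-linear surjective isometry $U$ of a real inner product space is orthogonal ($\langle Uf,Ug\rangle_{\mathbb{R}}=\langle f,g\rangle_{\mathbb{R}}$, which follows from the real polarization identity applied to $\|Uf\pm Ug\|^2=\|U(f\pm g)\|^2=\|f\pm g\|^2$), and applying this with $U$ = multiplication by $i$ (so $U^2=-\mathrm{Id}$) gives $\langle if,ig\rangle_{\mathbb{R}}=\langle f,g\rangle_{\mathbb{R}}$, whence $\langle if,g\rangle_{\mathbb{R}}=\langle i(if),ig\rangle_{\mathbb{R}}=\langle -f,ig\rangle_{\mathbb{R}}=-\langle f,ig\rangle_{\mathbb{R}}$, as needed. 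Once this is in hand, the rest of the verification is purely formal bookkeeping, and the theorem follows. I would present the argument compactly, citing~\cite{JvN} for the polarization step and the real Lorch theorem as the input, and emphasize that the content is precisely the observation that Lorch's conditions are ``real-scalar'' conditions, so nothing about the complex structure is used until the final polarization step reassembles it.
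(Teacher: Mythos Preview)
Your proposal is correct and follows essentially the same approach as the paper: observe that Lorch's conditions involve only real scalars and hence descend to $\mathbb{B}|_{\mathbb{R}}$, apply the real Lorch theorem to obtain $\langle\cdot,\cdot\rangle_{\mathbb{R}}$, and then invoke the Jordan--von Neumann complex polarization formula~\eqref{Epolarize}. The paper's argument is terser and simply cites~\cite{JvN} for the polarization step, whereas you spell out the skew-adjointness of multiplication by $i$; but the strategy is identical.
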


\noindent In particular, this provides a second proof of one implication
in Theorem~\ref{Tmain} over $\mathbb{C}$.

\section{Isosceles triangle characterization; weaker notions of homogeneity}

Note that the arguments in Section~\ref{S1} in fact show a strengthening
of Theorem~\ref{Tmain}. Namely: continuing the discussion preceding
Theorem~\ref{Tmain}, the ``orthogonal extension property'' can be
weakened to strong 3-point homogeneity, and even weaker -- wherein one
only works with $Y = Y'$ the vertices of an isosceles triangle in
$\mathbb{B}$ (with the two equal sides of specified length).

\begin{theorem}\label{T4}
The following are equivalent for a nonzero real/complex normed linear
space $(\mathbb{B}, \| \cdot \|)$.
\begin{enumerate}
\item $\| \cdot \|^2$ arises from an (real or complex) inner product on
$\mathbb{B}$.

\item If $Y,Y' \subset \mathbb{B}$ are finite subsets, with $|Y| = |Y'|$,
then any isometry $: Y \to Y'$ -- up to pre- and post- composing by
translations -- extends to an $\mathbb{R}$-linear onto isometry
$: \mathbb{B} \to \mathbb{B}$.

\item $\mathbb{B}$ is strongly $n$-point homogeneous for any
(equivalently, every) $n \geqslant 3$.

\item Given an isosceles triangle in $\mathbb{B}$ with vertex set $Y = Y'
=  \{ 0, f', g' \}$ such that $\| f' \| = \| g' \| = 1$, the isometry $:
Y \to Y'$ that fixes $0$ and flips $f',g'$ extends to an onto isometry of
$\mathbb{B}$.
\end{enumerate}
\end{theorem}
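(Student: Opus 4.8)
The plan is to prove the cycle of implications $(1)\Rightarrow(2)\Rightarrow(3)\Rightarrow(4)\Rightarrow(1)$, leaning on Theorem~\ref{Tmain} to supply most of the work. The implication $(2)\Rightarrow(3)$ is essentially immediate: specializing (2) to a pair of $n$-element subsets $Y,Y'$ and composing the given translations with the $\mathbb{R}$-linear onto isometry produces an onto isometry $\mathbb{B}\to\mathbb{B}$ extending the original isometry, which is exactly strong $n$-point homogeneity (for every, hence any, $n\geqslant 3$). The implication $(3)\Rightarrow(4)$ is also immediate, since the flip isometry of the vertex set $\{0,f',g'\}$ of a unit-legged isosceles triangle is a special case of an isometry between $3$-point subsets with $Y=Y'$.

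For $(1)\Rightarrow(2)$ I would reuse verbatim the forward-implication argument from the proof of Theorem~\ref{Tmain}: given an inner product space $\mathbb{B}$ and an isometry $T:Y\to Y'$ between finite subsets, pre- and post-compose with translations so that $0\in Y\cap Y'$ and $T(0)=0$, then invoke the ``orthogonal extension property'' established there (via the Gram-matrix/lurking-isometry argument and Proposition~\ref{PGPS}), which extends $T$ to an orthogonal --- in particular $\mathbb{R}$-linear --- onto isometry of $\mathbb{B}$. In the complex case one simply views $\mathbb{B}$ as $\mathbb{B}|_{\mathbb{R}}$, which is still a real inner product space, and the same conclusion gives an $\mathbb{R}$-linear onto isometry. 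Note that only $\mathbb{R}$-linearity is claimed in (2), which is exactly what the orthogonal extension delivers, so nothing new is needed here.

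The one implication carrying genuine content is $(4)\Rightarrow(1)$, and this is where the argument must be self-contained rather than a citation of Theorem~\ref{Tmain} (whose hypothesis is the stronger condition (3)). The strategy is to rerun the contradiction argument from the reverse-implication proof of Theorem~\ref{Tmain}, checking that it only ever uses hypothesis (4). Suppose $\|\cdot\|^2$ does not come from an inner product; then Lorch's condition (IP5) fails for every $\gamma'\in\mathbb{R}\setminus\{0,\pm 1\}$, so fixing one such $\gamma'$ we get $f_0,g_0\in\mathbb{B}$ with $\|f_0\|=\|g_0\|$ but $\|f_0+\gamma' g_0\|\neq\|g_0+\gamma' f_0\|$; rescaling, we may take $\|f_0\|=\|g_0\|=1$, and $0,f_0,g_0$ are distinct, so $\{0,f_0,g_0\}$ is the vertex set of a unit-legged isosceles triangle. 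Hypothesis (4) then extends the flip isometry to an onto isometry $T:\mathbb{B}\to\mathbb{B}$; by Mazur--Ulam \cite{MU} (real case, or applied to $\mathbb{B}|_{\mathbb{R}}$ in the complex case), $T$ is affine-linear, hence $\mathbb{R}$-linear since $T(0)=0$, and then
\[
\|f_0+\gamma' g_0\| = \|f_0-(-\gamma')g_0\| = \|T(f_0)-T(-\gamma' g_0)\| = \|g_0+\gamma' f_0\|,
\]
a contradiction. Hence $\|\cdot\|^2$ arises from a real inner product; in the complex case the polarization identity \eqref{Epolarize} of Jordan--von Neumann upgrades this to a complex inner product, exactly as in the proof of Theorem~\ref{Tmain}.

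The main obstacle --- really the only subtlety --- is making sure the reduction in $(4)\Rightarrow(1)$ genuinely stays within hypothesis (4): the flipped triangle must have $Y=Y'$ (it does, since $T$ fixes $0$ and permutes $\{f_0,g_0\}$) and the two equal legs must have the prescribed unit length (arranged by rescaling, legitimate since (IP5) is scale-invariant), and one must confirm the three points are genuinely distinct so that $|Y|=3$ and the configuration is an honest isosceles triangle --- distinctness follows because $\|f_0-g_0\|\neq 0$ (else $\gamma'$ would trivially ``work''). Everything else is a transcription of arguments already in the paper, so the proof is short; I would simply remark that $(1)\Leftrightarrow(3)$ is Theorem~\ref{Tmain} and that the new content is the equivalence with the \emph{a priori} weaker conditions (2) and (4).
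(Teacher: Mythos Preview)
Your proposal is correct and follows essentially the same route as the paper: both prove the cycle $(1)\Rightarrow(2)\Rightarrow(3)\Rightarrow(4)\Rightarrow(1)$, with $(1)\Rightarrow(2)$ taken from the forward implication of Theorem~\ref{Tmain}, $(2)\Rightarrow(3)\Rightarrow(4)$ declared trivial, and $(4)\Rightarrow(1)$ obtained by rerunning the Lorch/Mazur--Ulam contradiction from Theorem~\ref{Tmain} after the one new observation that the vectors $f',g'$ in (IP5) may be simultaneously rescaled to unit norm. The paper compresses this to a single sentence, but your more explicit verification that the rescaled triple is a genuine three-point isosceles configuration with $Y=Y'$ is exactly what underlies that sentence.
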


Indeed, we showed above that $(4) \implies (1) \implies (2)$ (as the
$f',g'$ in Lorch's condition~(IP5) can be simultaneously rescaled), while
$(2) \implies (3) \implies (4)$ is trivial. To see why one cannot assert
$\mathbb{C}$-linearity in the second statement, let
\[
\mathbb{B} = (\mathbb{C}^2, \| \cdot \|_2), \qquad
Y = \{ (0,0), (1,0), (i,0) \}, \qquad
Y' = \{ (0,0), (1,0), (0,1) \},
\]
and let $T : Y \to Y'$ fix $(0,0)$ and $(1,0)$, and send $(i,0)$ to
$(0,1)$. Then $T$ necessarily cannot extend to a $\mathbb{C}$-linear
map.
Also note that akin to Theorem~\ref{Tmain}, here too the final three
assertions each characterize inner products using (isosceles) metric
geometry alone, and without appealing to the vector space structure in
$\mathbb{B}$.

We conclude by exploring two weakenings of the notion of ``strong''
$n$-point homogeneity that was used to characterize when $\mathbb{B}$ is
an inner product space. The first question is to ask if one can work with
``usual'' $n$-point homogeneity, i.e.\ if one can remove the ``onto''
part of that definition. Note that in the literature, the requirement of
extending subset-isometries to onto-isometries of the whole (metric)
space is perhaps more natural than merely into-isometries, given their
appearance in well-known results and open problems in the Banach space
literature -- the Mazur--Ulam theorem \cite{MU}, the Banach--Mazur
problem \cite{Banach,Mazur}, and Tingley's problem \cite{Tingley} among
others -- and also given the folklore fact that this holds for all inner
product spaces \textit{and} in our results above. Now having studied the
``onto'' picture in detail, the first question (above) is natural.

To answer it, note that the proof of Theorem~\ref{Tmain} required the
``onto'' hypothesis solely to invoke the Mazur--Ulam theorem. This
hypothesis can thus be bypassed, at a cost:

\begin{theorem}\label{Tstrictlyconvex}
Suppose $(\mathbb{B}, \| \cdot \|)$ is a nonzero real or
complex normed linear space that is moreover strictly convex:
\[
\| a+b \| = \|a\| + \|b\| \quad \implies \quad  a,b \in \mathbb{B}
\text{ are linearly dependent}.
\]
Then $\| \cdot \|^2$ arises from a (real or complex, respectively) inner
product if and only if $\mathbb{B}$ is $n$-point homogeneous for any
(every) $n \geqslant 3$ -- equivalently, if the flip map on the vertices
of each isosceles triangle extends to a self-isometry of $\mathbb{B}$.
\end{theorem}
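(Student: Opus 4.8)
The plan is to run the reverse implication of Theorem~\ref{Tmain} essentially verbatim, with the Mazur--Ulam theorem replaced by its strictly convex refinement -- which, unlike Mazur--Ulam, does not require surjectivity. The forward implication needs nothing new: every inner product space is strongly $n$-point homogeneous by the proof of Theorem~\ref{Tmain}, hence $n$-point homogeneous for every $n$, and in particular the flip map on the vertices of any isosceles triangle extends to a (bijective) self-isometry. Conversely, $n$-point homogeneity for a single $n \geqslant 3$ already forces the isosceles-flip condition (apply it to $Y = Y' = \{0, f', g'\}$). So it suffices to prove that a strictly convex $\mathbb{B}$ in which the flip map on every isosceles triangle extends to a self-isometry has $\|\cdot\|^2$ induced by an inner product; this closes the cycle of equivalences and in particular identifies the ``any $n \geqslant 3$'' and ``every $n \geqslant 3$'' variants.

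The key tool I would record is the following strictly convex version of Mazur--Ulam: \emph{if $Y$ is a strictly convex real normed space and $f\colon X \to Y$ is an isometry from an arbitrary real normed space $X$ (not assumed onto), then $f$ is affine; and if moreover $f(0) = 0$ then $f$ is $\mathbb{R}$-linear}. To prove this I would first observe that in a strictly convex space the midpoint of two distinct points $a, b$ is metrically determined as the unique $z$ with $\|z - a\| = \|z - b\| = \tfrac12\|a - b\|$: such a $z$ gives equality in $\|a - b\| \leqslant \|a - z\| + \|z - b\|$, so by strict convexity $a - z$ and $z - b$ are linearly dependent; being nonzero of equal norm they are equal or opposite, and the opposite case $a - z = -(z - b)$ would force $a = b$, hence $a - z = z - b$ and $z = \tfrac12(a+b)$. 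Since an isometry is $1$-Lipschitz, hence continuous, and transports the metric description of the midpoint of $a, b$ to that of $f(a), f(b)$, uniqueness of midpoints in $Y$ gives $f\bigl(\tfrac12(a+b)\bigr) = \tfrac12\bigl(f(a)+f(b)\bigr)$ for all $a, b \in X$. A continuous midpoint-preserving map between normed linear spaces is affine (solve Jensen's equation on dyadic rationals, then pass to the limit by continuity), and it is $\mathbb{R}$-linear once it fixes the origin.

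With this lemma in hand I would rerun the reverse implication of Theorem~\ref{Tmain} over $\mathbb{R}$ (viewing a complex $\mathbb{B}$ as $\mathbb{B}|_{\mathbb{R}}$). Assume for contradiction that $\|\cdot\|^2$ does not arise from an inner product, and fix $\gamma' \in \mathbb{R}\setminus\{0,\pm 1\}$. Then Lorch's (IP5) fails, so there exist $f', g' \in \mathbb{B}$ with $\|f'\| = \|g'\|$ but $\|f' + \gamma' g'\| \neq \|g' + \gamma' f'\|$. The degenerate possibilities $f' = 0$, $g' = 0$, $f' = g'$, and $f' = -g'$ all satisfy the displayed equality outright, so $0, f', g'$ are three distinct, non-collinear points with $\|f' - 0\| = \|g' - 0\|$ -- the vertices of a genuine isosceles triangle (rescaling $f', g'$ by $1/\|f'\|$ if one insists on unit side lengths, which changes neither side of the inequality). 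By hypothesis the flip isometry of $\{0, f', g'\}$ fixing $0$ and exchanging $f', g'$ extends to a (possibly non-surjective) self-isometry $T$ of $\mathbb{B}$; since $T(0) = 0$ and $\mathbb{B}$ is strictly convex, the lemma makes $T$ $\mathbb{R}$-linear. Using $T(f') = g'$, $T(g') = f'$ and $\gamma' \in \mathbb{R}$,
\[
\|f' + \gamma' g'\| = \|f' - (-\gamma' g')\| = \|T(f') - T(-\gamma' g')\| = \|g' + \gamma' f'\|,
\]
contradicting the choice of $f', g'$. Hence (IP5) holds, and Lorch's theorem -- in the complex case the ``Complex Lorch'' strengthening established above -- shows $\|\cdot\|^2$ arises from a real inner product on $\mathbb{B}|_{\mathbb{R}}$; in the complex case one promotes this to a complex inner product via the polarization formula~\eqref{Epolarize}, exactly as at the end of the proof of Theorem~\ref{Tmain}. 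The only genuine obstacle is isolating and proving the strictly convex refinement of Mazur--Ulam -- the metric characterization of midpoints in strictly convex spaces, and the step from midpoint-preservation plus continuity to affineness -- a classical fact that must be stated carefully precisely because surjectivity is deliberately dropped; together with dispatching the degenerate triangle configurations so that the isosceles-flip hypothesis genuinely applies. Everything else is word-for-word the Section~\ref{S1} argument with ``linear'' replaced by ``$\mathbb{R}$-linear''.
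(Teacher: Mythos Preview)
Your proposal is correct and follows essentially the same approach as the paper: replace the Mazur--Ulam theorem in the proof of Theorem~\ref{Tmain} by its strictly convex refinement (which the paper attributes to Baker~\cite{Baker} rather than proving), and otherwise rerun the argument verbatim. Your self-contained proof of Baker's result via the metric characterization of midpoints is a nice addition but not a departure in strategy.
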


In particular, this reveals yet another equivalent condition to the inner
product property: $\mathbb{B}$ is strictly convex and (``usual'')
$3$-point homogeneous.

\begin{proof}
The reverse implication over $\mathbb{R}$ follows from the corresponding
part of Theorem~\ref{Tmain}, using Baker's result~\cite{Baker} that the
Mazur--Ulam theorem does not require the surjectivity hypothesis if
$\mathbb{B}$ is strictly convex.
The reverse implication over $\mathbb{C}$ is now shown as in
Theorem~\ref{Tmain};
and the forward implication follows from Theorem~\ref{Tmain}.
\end{proof}

We formulate the natural question corresponding to the remaining case.

\begin{question}
Suppose $\mathbb{B}$ is a (real) normed linear space that is not strictly
convex (in particular, not strongly $3$-point homogeneous or equivalently
an inner product space). Can $\mathbb{B}$ be $3$-point homogeneous?
(To start, one can assume $\mathbb{B}$ separable and complete.)
\end{question}

We end by answering a natural question that arises from
Theorem~\ref{T4}(4). Note that (strong) $n$-point homogeneity concerns
subsets $Y,Y'$ of size at most $n$. However, Theorem~\ref{T4}(4) shows
that one can work with subsets of size exactly $n=3$. Given
Theorem~\ref{Tmain}, one can ask if subsets of size exactly $4$, or any
fixed integer $>4$, will suffice to isolate an inner product. Our final
result provides an affirmative answer, thereby adding to
Theorem~\ref{T4}:

\begin{theorem}\label{T5}
The conditions in Theorem~\ref{T4} are further equivalent to:
\begin{enumerate}
\setcounter{enumi}{4}
\item For any fixed $3 \leqslant n < \infty$, every isometry $T : Y \to
Y'$ of $n$-point subsets $Y,Y' \subseteq X$ extends to an onto isometry
of $\mathbb{B}$.
\end{enumerate}
\end{theorem}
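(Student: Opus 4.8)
The plan is to show $(5)\Longleftrightarrow(1)$ by bootstrapping between the two implications already established for Theorem~\ref{T4}, using the fact that $(5)$ sits ``between'' condition~$(4)$ and strong $n$-point homogeneity. First I would observe that $(1)\implies(5)$ is immediate from the forward implication in Theorem~\ref{Tmain}: if $\|\cdot\|^2$ comes from an inner product, then by the ``orthogonal extension property'' proved there, \emph{every} isometry between subsets of $\mathbb{B}$ of any cardinality (in particular, of size exactly $n$) extends to an onto isometry after pre- and post-composing with translations; translations are themselves onto isometries, so composing back yields the desired onto extension. Thus the only real content is $(5)\implies(1)$.

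For $(5)\implies(1)$, the key point is that the contradiction argument in the reverse implication of Theorem~\ref{Tmain} uses only \emph{one} specific three-point isometry --- the flip $T$ of the vertex set $Y=Y'=\{0,f',g'\}$ of an isosceles triangle, fixing $0$ and interchanging $f',g'$. So I would pad this three-point set up to an $n$-point set to which hypothesis~$(5)$ applies. Concretely: assuming for contradiction that $\|\cdot\|^2$ is not induced by an inner product, Lorch's condition~(IP5) fails for the fixed scalar $\gamma'\in\mathbb{R}\setminus\{0,\pm1\}$, so there are $f',g'\in\mathbb{B}$ with $\|f'\|=\|g'\|$ but $\|f'+\gamma'g'\|\neq\|g'+\gamma'f'\|$, and $0,f',g'$ are distinct. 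Now I need $n-3$ further points $p_1,\dots,p_{n-3}$ so that $Y:=\{0,f',g',p_1,\dots,p_{n-3}\}$ has exactly $n$ elements and the flip extends to an isometry of $Y$ onto itself (equivalently onto some $Y'$). The cleanest choice is to take the $p_i$ on the fixed-point locus of the flip: e.g.\ scalar multiples $p_i = t_i(f'+g')$ for suitably chosen distinct reals $t_i$ (or $t_i\cdot m$ for any fixed midpoint-type vector $m$ fixed by the natural candidate extension). Any onto isometry $S$ of $\mathbb{B}$ that realizes the flip on $\{0,f',g'\}$ and fixes each $p_i$ is still affine-linear by Mazur--Ulam, hence linear as $S(0)=0$; so whether or not the $p_i$ are genuinely fixed is immaterial --- once $S$ is linear and swaps $f'\leftrightarrow g'$ and fixes $0$, the computation $\|f'+\gamma'g'\| = \|S(f') - S(-\gamma'g')\| = \|g'+\gamma'f'\|$ gives the contradiction verbatim. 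Therefore it suffices to produce \emph{any} $n$-element superset $Y$ of $\{0,f',g'\}$ together with the corresponding flip isometry $T:Y\to Y$ (extend $T$ arbitrarily on the new points as long as $T$ is an isometry onto its image $Y'$), invoke~$(5)$ to get the onto extension $S$, and conclude via Mazur--Ulam.

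The one genuine subtlety --- and the step I expect to be the main obstacle --- is arranging the new points so that $T$ really is an isometry of an $n$-element set and so that $\dim\mathbb{B}$ is large enough to house $n$ distinct points at all; if $\dim\mathbb{B}$ is small (say $\mathbb{B}=\mathbb{R}^2$) there may be fewer than $n$ points available in a ``symmetric'' configuration, but this is a non-issue because one can simply take $p_i = s_i f'$ for distinct positive reals $s_i$ avoiding $1$: then $T$ can fix $0$, swap $f'\leftrightarrow g'$, and send $s_i f'\mapsto s_i g'$, which is an isometry $Y\to Y'$ since $\|f'\|=\|g'\|$ forces $\|s_i f' - s_j f'\| = \|s_i g' - s_j g'\|$ for all $i,j$ --- wait, that last equality is $|s_i-s_j|\,\|f'\| = |s_i-s_j|\,\|g'\|$, which holds, and similarly $\|s_i f' - f'\| = \|s_i g' - g'\|$ and $\|s_i f'\| = \|s_i g'\|$, while $\|s_i f' - g'\|$ versus $\|s_i g' - f'\|$ need not match. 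To avoid this, I would instead place the padding points along the flip-axis: pick $w\in\mathbb{B}$ with $0,f',g',w$ spanning enough room and set $p_i = t_i(f'+g')$ (distinct small reals $t_i\neq 0$), and let $T$ fix each $p_i$; then $T:Y\to Y$ is an isometry provided $\|t_i(f'+g') - f'\| = \|t_i(f'+g') - g'\|$ for each $i$ and $\|t_i(f'+g') - t_j(f'+g')\|$ is obviously symmetric and $\|t_i(f'+g')\|$ is fixed --- and $\|t_i(f'+g')-f'\|=\|t_i(f'+g')-g'\|$ holds because the map $x\mapsto t_i(f'+g')-x$ composed with the linear flip... hmm, this still silently uses that the flip is an isometry on all of $P$. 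Since we do \emph{not} know that a priori, the safe route is: do not demand $T$ fix or move the new points in any structured way at all --- simply take $Y' := T(Y)$ to be whatever set results, noting that for the implication we get to \emph{choose} $Y'$, so we only need $T$ injective (automatic on distinct points) and then $(5)$ hands us the onto extension regardless. Thus the obstacle dissolves: choose $p_i := c_i e$ for a fixed unit vector $e$ linearly independent from... no --- cleanest of all: enlarge $\{0,f',g'\}$ by $n-3$ points $q_1,\dots,q_{n-3}$ chosen merely to be distinct from each other and from $0,f',g'$ (possible as $\mathbb{B}\neq 0$ is infinite or has $\dim\geq 2$; if $\dim\mathbb{B}=1$ then $\mathbb{B}$ is already an inner product space and there is nothing to prove, and if $\dim\mathbb{B}<n$ one can still find $n$ collinear distinct points on the line $\mathbb{R}f'$), let $T$ restrict to the flip on $\{0,f',g'\}$ and to \emph{any} injection on the $q_i$'s, set $Y':=T(Y)$, apply~$(5)$, and finish by Mazur--Ulam as above. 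So the write-up reduces to: (i) note $(1)\implies(5)$ from Theorem~\ref{Tmain}; (ii) for $(5)\implies(1)$ reduce to $n=3$-type contradiction by padding with $n-3$ arbitrary distinct collinear points; (iii) invoke Mazur--Ulam on the onto extension. The main care needed is just the bookkeeping that $Y$ genuinely has $n$ elements and that a flip-type isometry onto \emph{some} $Y'$ exists --- which is trivial once one allows $T$ to act freely on the padding points.
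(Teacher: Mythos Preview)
Your overall strategy matches the paper's: show $(1)\implies(5)$ via the orthogonal extension property, and for $(5)\implies(1)$ pad the isosceles triple $\{0,f',g'\}$ to an $n$-point set, apply $(5)$, then Mazur--Ulam. You also correctly identify the only real difficulty: how to choose the $n-3$ padding points so that the flip extends to an \emph{isometry} of the enlarged set.

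However, your final resolution of this difficulty is not valid. After rejecting the axis points $t_i(f'+g')$ and the scalar multiples $s_i f'$ (both for the right reason), you write: ``do not demand $T$ fix or move the new points in any structured way at all --- simply take $Y':=T(Y)$ to be whatever set results, noting that for the implication we get to choose $Y'$, so we only need $T$ injective.'' This is the gap. Hypothesis~$(5)$ applies only to \emph{isometries} $T:Y\to Y'$, not to arbitrary injections. If you define $T$ freely on a padding point $q$, you must still produce $T(q)\in\mathbb{B}$ with $\|T(q)\|=\|q\|$, $\|T(q)-g'\|=\|q-f'\|$, and $\|T(q)-f'\|=\|q-g'\|$ simultaneously; there is no reason such a point exists in a general normed space, and finding one is exactly the obstruction you already ran into in your two earlier attempts. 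So the ``obstacle'' does not dissolve --- it is the whole content of the proof.

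The paper handles this by taking the padding points to be \emph{fixed} by $T$, which forces them to lie in the equidistant locus $Z_{f',g'}:=\{h\in P:\|h-f'\|=\|h-g'\|\}$ inside the real plane $P=\mathrm{span}_{\mathbb{R}}(f',g')$ (after disposing of the degenerate case $g'=-f'$, where $-\mathrm{id}_{\mathbb{B}}$ already works). The nontrivial step --- the missing idea in your write-up --- is to show $Z_{f',g'}$ is uncountable, so that one can always pick $n-3$ distinct nonzero points from it. The paper argues this by contradiction: if $Z_{f',g'}$ were countable, then $P\setminus Z_{f',g'}$ would be a countably-punctured (topological) plane, hence path-connected, and the continuous function $h\mapsto\|h-f'\|-\|h-g'\|$ would have to vanish somewhere on it despite changing sign between $f'$ and $g'$. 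With $h_1,\dots,h_{n-3}\in Z_{f',g'}\setminus\{0\}$ in hand, the map fixing $0,h_1,\dots,h_{n-3}$ and swapping $f'\leftrightarrow g'$ is a genuine isometry of $Y=Y'$, and your Mazur--Ulam endgame then goes through.
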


\begin{proof}
Clearly $(2) \implies (5)$. Conversely, it suffices to assume $(5)$ and
show Theorem~\ref{T4}(1). We follow the proof of Theorem~\ref{Tmain}: if
$\| \cdot \|$ is not induced by an inner product, there exist $\gamma'
\in \mathbb{R} \setminus \{ 0, \pm 1 \}$ and $f',g' \in \mathbb{B}$ with
$\| f' \| = \| g' \|$, but $\| f' + \gamma' g' \| \neq \| g' + \gamma' f'
\|$. Thus, $0, f', g'$ are distinct.

There are now two cases. First if $g' = -f'$ then the flip map extends to
the onto isometry $T \equiv - {\rm id}_{\mathbb{B}}$. Else $f', g'$ are
$\mathbb{R}$-linearly independent, so their $\mathbb{R}$-span is a real
normed plane $P \subseteq \mathbb{B}$, say. As all norms on
$\mathbb{R}^2$ are equivalent, $(P, \| \cdot \|)$ is linearly
bi-Lipschitz -- hence homeomorphic -- to $(\mathbb{R}^2, \| \cdot \|_2)$.

We now claim that the ``equidistant locus in $P$'' -- i.e., the locus
$Z_{f',g'} := \{ h \in P : \| h - f' \| = \| h - g' \| \}$ is
uncountable. Indeed, if this is not true then $Z_{f',g'}$ is at most
countable, so $P \setminus Z_{f',g'}$ is homeomorphic to a
countably-punctured Euclidean plane, hence is path-connected. As the
function
\[
\varphi : P \to \mathbb{R}; \quad h \mapsto \| h - f' \| - \| h - g' \|
\]
is continuous on $P \setminus Z_{f',g'}$, and switches signs from $f'$ to
$g'$, it must vanish at an ``intermediate'' point in $P \setminus
Z_{f',g'}$. This contradiction shows the claim.

Finally, as $n \geqslant 3$, choose distinct points $h_1, \dots, h_{n-3}
\in Z_{f',g'} \setminus \{ 0 \}$, and define $Y, Y'$ via:
\[
Y = Y' := \{ 0, f', g', \ h_1, \dots, h_{n-3} \}.
\]

Let $T : Y \to Y'$ interchange $f',g'$ and fix all other points in $Y$.
By hypothesis, $T$ extends to an onto isometry of $\mathbb{B}$. Now
repeating the proof of Theorem~\ref{Tmain}, it follows that $\mathbb{B}$
is a real inner product space. Finally, if $\mathbb{B}$ was a complex
linear space then the discussion around~\eqref{Epolarize} reveals the
complex inner product structure.
\end{proof}

\subsection*{Acknowledgments}

We thank Pradipta Bandopadhyay, Deepak Gothwal, Lajos Moln\'ar, and Amin
Sofi for valuable discussions.
We also thank both anonymous referees for their detailed reading of our
paper. In particular, we are grateful to one referee for numerous
polishings that distinctly improved our exposition, and the other referee
for explaining why Theorem~\ref{T5} holds.
A.K.~was partially supported by SwarnaJayanti Fellowship grants
SB/SJF/2019-20/14 and DST/SJF/MS/2019/3 from SERB and DST (Govt.~of
India), a Shanti Swarup Bhatnagar Award from CSIR (Govt.\ of India), and
the DST FIST program 2021 [TPN--700661].



\end{document}